\newtheorem{theorem}{Theorem}[section]
\theoremstyle{definition}%da qui in poi negli environment introdotti il testo non � in italico 
\newtheorem{remark}{Remark}
\newcommand{\R}{\mathbb{R}}
\def\beq{\begin{equation}}
\def\eeq{\end{equation}}
\def\f{\varphi}
\title{\sc Symmetry of components, Liouville-type theorems and classification results for some nonlinear elliptic systems}
\date{}
\author{Alberto Farina }
\begin{document}
\numberwithin{equation}{section}
%%%%%%%%%%%%%%%%%%%%%%%%%%%%%%%%%%%%%%%%%%%%%%%%%%%%%%%%%%%%%%%%%%%%%%%%%%%%%%%
\maketitle
%%%%%%%%%%%%%%%%%%%%%%%%%%%%%%%%%%%%%%%%%%%%%%%%%%%%%%%%%%%%%%%%%%%%%%%%%%%%%%%
{\footnotesize
%\centerline {Alberto FARINA }
\centerline{LAMFA, CNRS UMR 7352, Universit\'e de Picardie Jules Verne}
\centerline{33 rue Saint-Leu, 80039 Amiens, France}
\centerline{and}
\centerline{Institut Camille Jordan, CNRS UMR 5208, Universit\'e Claude Bernard Lyon I}
\centerline{43 boulevard du 11 novembre 1918, 69622 Villeurbane cedex, France}
\centerline{email: alberto.farina@u-picardie.fr}}

\centerline{}

\begin{abstract}
We prove the symmetry of components and some Liouville-type theorems for, possibly sign changing, entire distributional solutions  to a family of  nonlinear elliptic systems encompassing models arising in Bose-Einstein condensation and in nonlinear optics. For these models we also provide precise classification results for non-negative solutions. The sharpness of our results is also discussed. 
\end{abstract}

\smallskip

%\noindent \textbf{Keywords:} {\footnotesize }
\quad \textbf {} {\footnotesize }

\section{Introduction}

The aim of this paper is to classify entire solutions $(u,v)$ of the following nonlinear elliptic systems arising in Bose-Einstein condensation and in nonlinear optics (see for instance \cite{F}, \cite{KL-D}, \cite{Perez-Beitia} and the references therein) : 

\beq\label{Sistema}
\begin{cases}

- \Delta u  +\alpha \vert u \vert^{m-1} u  = - \lambda \vert u \vert^{\theta-1} u   + \beta \vert u \vert^s 
\vert v \vert^{s+\gamma -1} v  & \text{in $\R^N$}\\
- \Delta v  +\alpha \vert v \vert^{m-1} v  = - \lambda \vert v \vert^{\theta-1} v   + \beta \vert v \vert^s 
\vert u \vert^{s+\gamma -1} u & \text{in $\R^N$}
\end{cases}
\eeq
where $\alpha $, $\beta$ and $ \lambda $ are {\it {continuous}} functions defined on $\R^N$, $N\ge1$. \\

The present paper is motivated by the recent and interesting work \cite{QS1}, as well as by the stimulating discussions with the Authors of \cite{QS1}. 

More precisely, we shall prove that any entire distributional solution $(u,v)$ of system \eqref{Sistema}, has the symmetry property $u = v$ ({\it symmetry of components}). We use this result to establish some new Liouville-type theorems as well as some classification results. 

Our method is different (and complementary) from the one used in \cite{QS1}. It exploits the attractive character of the interaction between the two states $u$ and $v$.  It applies to any distributional entire solution, possibly sign-changing and without any other restriction.  Also, it applies to systems with nonlinearities that are not necessarily positive (or cooperative) nor necessarily homogeneous. 
Section 2 is devoted to the main results, while in section 3 we consider their extension to more general models involving nonlinearities which are not necessarily of polynomial type. We also discuss the sharpness of our results. 

\section{Model problems and main results}

Throughout the section $\alpha $, $\beta$ and $ \lambda $ will be {\it {continuous}} functions defined on $\R^N$, $N\ge1$. \\

\begin{theorem} {\bf (Symmetry of components)} \label{teo1}
Let $ N\ge 1 $ and assume $ m>0$, $\theta >1$, $s\ge0$, $\gamma \ge1$, $\alpha = \alpha(x) \ge0$, $\beta = \beta(x) \ge0$, $ \lambda = \lambda(x) \ge \lambda_0>0$. 

Let $(u,v)$ be a distributional solution of system \eqref{Sistema} such that  $ u, v \in L^p_{loc}(\R^N)$,  with $ p = \max \{m, \theta, 2s+\gamma\}$. 

Then $u = v$.  
\end{theorem}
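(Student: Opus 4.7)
My plan is to exploit the attractive character of the coupling by working with the difference $w := u-v$, whose algebraic signs conspire after subtracting the two equations. Subtracting the second equation from the first one obtains, distributionally,
\begin{equation*}
-\Delta w + \alpha\bigl(|u|^{m-1}u - |v|^{m-1}v\bigr) + \lambda\bigl(|u|^{\theta-1}u - |v|^{\theta-1}v\bigr) = \beta\bigl(|u|^s|v|^{s+\gamma-1}v - |v|^s|u|^{s+\gamma-1}u\bigr).
\end{equation*}
The right-hand side factors as $\beta|u|^s|v|^s\bigl(|v|^{\gamma-1}v-|u|^{\gamma-1}u\bigr)$ and, since $t\mapsto |t|^{\gamma-1}t$ is strictly increasing, has sign opposite to $w$; by the same monotonicity applied to $t\mapsto|t|^{m-1}t$ and $t\mapsto|t|^{\theta-1}t$, the two bracketed terms on the left share the sign of $w$. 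Young's inequality together with $u,v\in L^p_{loc}$ for $p=\max\{m,\theta,2s+\gamma\}$ places every nonlinear term in $L^1_{loc}$, so that $\Delta w\in L^1_{loc}$.

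Next I would apply Kato's inequality (legitimate because $w,\Delta w\in L^1_{loc}$), which yields $-\Delta|w|\le(\mathrm{sign}\,w)(-\Delta w)$ in $\mathcal{D}'(\R^N)$. Using the sign information above, the right-hand side equals $-\alpha\bigl||u|^{m-1}u-|v|^{m-1}v\bigr| - \lambda\bigl||u|^{\theta-1}u-|v|^{\theta-1}v\bigr| - \beta\bigl||u|^s|v|^{s+\gamma-1}v-|v|^s|u|^{s+\gamma-1}u\bigr|$, i.e.\ three nonpositive contributions. Discarding the $\alpha$- and $\beta$-terms and using the elementary bound $\bigl||a|^{\theta-1}a-|b|^{\theta-1}b\bigr|\ge 2^{1-\theta}|a-b|^\theta$ valid for $\theta\ge 1$, together with $\lambda\ge\lambda_0>0$, this produces the Keller--Osserman type differential inequality
\begin{equation*}
\Delta|w|\ge 2^{1-\theta}\lambda_0\,|w|^\theta \qquad \text{in } \mathcal{D}'(\R^N).
\end{equation*}

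The remaining step is a Liouville argument: since $\theta>1$, the only nonnegative distributional solution of $\Delta W\ge cW^\theta$ on all of $\R^N$ is $W\equiv 0$; applied to $W=|w|$ this gives $u=v$. I expect this to be the main obstacle. A direct test-function computation with $\phi=\eta^{2\theta/(\theta-1)}$ (for a standard cutoff $\eta$) followed by H\"older's inequality only closes the argument in the subcritical range $\theta(N-2)<N$, which does not cover the full hypothesis of the theorem. To handle arbitrary $\theta>1$ and all $N\ge 1$ one has to invoke the genuine Keller--Osserman mechanism (finite-distance explosion of radial super-solutions of $\Delta W=cW^\theta$), combined with a comparison/approximation step to accommodate the merely distributional framework. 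Once this Liouville statement is granted, the rest of the proof---the sign algebra, Kato's inequality, and the elementary power inequalities---is essentially routine.
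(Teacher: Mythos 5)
Your proposal is essentially the paper's proof: subtract the two equations, observe that the $\alpha$- and $\lambda$-terms share the sign of $w=u-v$ while the $\beta$-term opposes it, apply Kato's inequality, discard the two nonpositive contributions, use an elementary power inequality, and conclude by a Liouville theorem for $\Delta W\ge c\,W^\theta$. The only cosmetic difference is that you work with $|w|$ whereas the paper applies the one-sided form of Kato's inequality to $(u-v)^+$ and then swaps the roles of $u$ and $v$; these are interchangeable. As for what you flag as ``the main obstacle'': your instinct is correct that the naive test-function computation only reaches the subcritical range, but the required Liouville statement for distributional supersolutions $W\in L^\theta_{loc}$, $W\ge 0$, $\Delta W\ge c\,W^\theta$, $\theta>1$, on all of $\R^N$ is a known result --- it is precisely Lemma~2 of Brezis, \emph{Semilinear equations in $\R^N$ without condition at infinity} (Appl.\ Math.\ Optim.\ 1984), which the paper cites at this exact point. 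So you have correctly identified the decisive ingredient and its Keller--Osserman nature; you just need the reference rather than re-proving it.
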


\begin{proof} From \eqref{Sistema} we have

\beq \nonumber
\Delta (u-v) = \alpha (\vert u \vert^{m-1} u - \vert v \vert^{m-1} v ) + 
\eeq 
\beq \label{differenzaeq} 
+ \lambda ( \vert u \vert^{\theta-1} u - \vert v \vert^{\theta-1} v ) + \beta \vert u \vert^s \vert v \vert^s (\vert u \vert^{\gamma-1} u  - \vert v \vert^{\gamma-1} v ) \qquad {\rm in} \quad {\cal D}^{'} (\R^N).
\eeq

Set $\psi = u-v$. The assumptions on $u$ and $v$ imply that $\psi \in L^p_{loc}(\R^N)$ and $ \Delta \psi$ belongs to $ L^1_{loc}(\R^N)$. Thus we can apply Kato's inequality \cite{Kato, Brezis} to get  

\beq \label{differenzaeqKato} 
\Delta (\psi^+) \ge \Delta \psi 1_{\{ u-v>0\}} \ge  \lambda_0 ( \vert u \vert^{\theta-1} u - \vert v \vert^{\theta-1} v ) 1_{\{ u-v>0\}} \qquad {\rm in} \quad {\cal D}^{'} (\R^N),
\eeq
where $ 1_E$  denotes the characteristic function of the measurable subset $ E \subset \R^N$. 

\smallskip

Reminding the well known inequality
\beq \vert t \vert^{q-1} t - \vert s \vert^{q-1} s \ge c_q (t-s)^q, \quad  for\ \  t>s\qquad(q\ge1),\label{wki}\eeq
from \eqref{differenzaeqKato}  we obtain

\beq \label{disugBrezis} 
\Delta (\psi^+) \ge \lambda_0  c_q (\psi^+)^{\theta}  \qquad {\rm in} \quad {\cal D}^{'} (\R^N).
\eeq
 Since $\lambda_0  c_q >0$ and $\theta >1$ we immediately get $\psi^+ =0$ (cfr. Lemma 2 of \cite{Brezis}).  Hence $ u \le v$ a.e. on $\R^N.$ Finally, exchanging the role of $u$ and $v$ we obtain the desired conclusion $u = v$.\end{proof}

\medskip

 Then we are in position to prove the following 
 
 \medskip

\begin{theorem} {\bf (of Liouville-type)} \label{teo2}
Assume $ N\ge 1 $ and let $(u,v)$ be a distributional solution of system \eqref{Sistema},  where $ m>0$, $\theta >1$, $s\ge0$, $\gamma \ge1$, $\alpha = \alpha(x) \ge0$, $\beta = \beta(x) \ge0$, $ \lambda = \lambda(x) \ge \lambda_0>0$ and $ u, v \in L^p_{loc}(\R^N)$,  with $ p = \max \{m, \theta, 2s+\gamma\}$. 

\smallskip

Assume further then $ \theta = 2s + \gamma$. 

\item[i)] If $ \, \inf_{\R^N} (\lambda - \beta) >0 $, then $ u=v=0$.

\item[ii)] If $\lambda \ge \beta$, $ \alpha \ge \alpha_0 >0$ and $ m>1$, then $ u=v=0$.

\item[iii)] If $ m=\theta$ and $ \, \inf_{\R^N} (\alpha + \lambda - \beta) >0 $, then $ u=v=0$.

\item[iv)] If $\lambda = \beta$, $ \alpha  =0 $, then $u=v$ and $u$ is a harmonic function. In particular,  if  either $u$ or $v$ is  bounded  on one side, then $ u=v=const.$

\end{theorem}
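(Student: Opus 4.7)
The plan is to combine Theorem \ref{teo1} with Kato's inequality and the Brezis Liouville lemma already invoked in the proof of the symmetry theorem.

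First, by Theorem \ref{teo1} the hypotheses ensure $u=v$, so \eqref{Sistema} reduces to the single scalar equation
\beq \nonumber
-\Delta u + \alpha |u|^{m-1} u = -\lambda |u|^{\theta-1}u + \beta |u|^{2s+\gamma-1}u \qquad {\rm in}\ {\cal D}'(\R^N).
\eeq
Using the assumption $\theta = 2s+\gamma$, this collapses to
\beq \nonumber
\Delta u = \alpha |u|^{m-1}u + (\lambda - \beta)|u|^{\theta-1}u \qquad {\rm in}\ {\cal D}'(\R^N),
\eeq
with right-hand side in $L^1_{loc}(\R^N)$ thanks to the $L^p_{loc}$ assumption. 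Next I would apply Kato's inequality to the function $|u|$, which gives
\beq \nonumber
\Delta |u| \ge \operatorname{sign}(u)\,\Delta u = \alpha |u|^{m} + (\lambda-\beta)|u|^{\theta} \qquad {\rm in}\ {\cal D}'(\R^N).
\eeq

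From this single inequality the three Liouville statements follow by bounding the right-hand side below by a single coercive power and invoking the Brezis lemma (Lemma~2 of \cite{Brezis}) exactly as in the proof of Theorem \ref{teo1}. In case (i) one writes $\Delta |u| \ge (\inf_{\R^N}(\lambda-\beta))|u|^\theta$ with $\theta>1$, yielding $u\equiv 0$. In case (ii) one discards the nonnegative term $(\lambda-\beta)|u|^\theta$ and keeps $\Delta |u| \ge \alpha_0 |u|^m$ with $m>1$, again concluding $u\equiv 0$. In case (iii), since $m=\theta$, the two powers combine into $\Delta |u| \ge (\alpha+\lambda-\beta)|u|^\theta \ge c_0 |u|^\theta$ with $c_0 = \inf_{\R^N}(\alpha+\lambda-\beta)>0$ and $\theta>1$, so once more $u\equiv 0$.

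Finally, in case (iv) the hypotheses $\lambda=\beta$ and $\alpha\equiv 0$ make the reduced scalar equation $\Delta u = 0$ in ${\cal D}'(\R^N)$, whence by Weyl's lemma $u$ is harmonic on $\R^N$. The classical Liouville theorem for one-side bounded harmonic functions then yields $u\equiv const.$ whenever $u$ (equivalently $v$) is bounded from one side. No real obstacle is expected: once $u=v$ has been established by Theorem \ref{teo1}, each item reduces to a well-known scalar inequality argument, the only mildly delicate point being the routine verification that the Kato inequality may be applied in the distributional framework, which is already ensured by the integrability hypothesis $u,v\in L^p_{loc}(\R^N)$ with $p=\max\{m,\theta,2s+\gamma\}$.
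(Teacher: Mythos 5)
Your proposal is correct and follows essentially the same route as the paper: reduce to the scalar equation via Theorem \ref{teo1}, apply Kato's inequality, and invoke Brezis's Lemma 2 on a coercive power nonlinearity, then treat item (iv) by Weyl's lemma and the classical Liouville theorem for one-sided bounded harmonic functions. The only (cosmetic) difference is that you apply Kato's inequality to $|u|$ directly, which yields $u\equiv 0$ in one stroke, whereas the paper works with $u^+$ to conclude $u\le 0$ and then uses the oddness of the reduced equation (i.e.\ that $-u$ is also a solution) to get $u^-=0$; both variants are standard and equally valid here.
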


\begin{proof} 
By the previous theorem we have $ u=v$. Hence 

\beq \label{equazunica} 
\Delta u = \alpha \vert u \vert^{m-1} u + (\lambda- \beta) \vert u \vert^{\theta-1} u \qquad {\rm in} \quad {\cal D}^{'}(\R^N)
\eeq
and by Kato's inequality (once again) we see that

\beq \label{equazunicaKato} 
\Delta u^+ \ge \alpha (u^+)^{m} + (\lambda- \beta) (u^+)^{\theta} \qquad {\rm in} \quad {\cal D}^{'}(\R^N).
\eeq

If i) (or ii) or iii)) is in force, there are $\epsilon>0$ and $ \eta>1$ such that 

\beq \label{eq3} 
\Delta u^+ \ge \epsilon  (u^+)^{\eta}  \qquad {\rm in} \quad {\cal D}^{'}(\R^N).
\eeq

Thus $u^+ =0$ and then $ u \le 0 $ on $ \R^N$. On the other hand, also $-u$ is a solution of \eqref{equazunica}, hence $ u^- =0$ and the desired conclusion follows.

If iv) holds true, $u$ and $v$ are harmonic functions. Hence $u=v=const.$ by the classical Liouville Theorem.

\end{proof}
\medskip

Some remarks are in order :

\medskip

\begin{remark}  \label{rem1}
{\em

\item[1)] The assumption : $ \lambda = \lambda(x) \ge \lambda_0>0$  is necessary,  as it easily seen by choosing $ m=\alpha=1 $ and $ \lambda =\beta =0$ in \eqref{Sistema}.  In this case system \eqref{Sistema} reduces to 

\beq\label{Sistemasempli}
\begin{cases}

- \Delta u  + u  = 0 & \text{in $\R^N$}\\
- \Delta v  +  v  = 0 & \text{in $\R^N$}
\end{cases}
\eeq
which admits positive, non-trivial and non-symmetric solutions.  For instance $ u(x) = e^{x_1}$ and $ v(x) = e^{- x_1} + 2 e^{x_1}$, where $ x = (x_1,...,x_N) \in \R^N$. 

\item[2)] The assumptions : '' $ \lambda - \beta \ge0 $ " in the above Thereom \ref{teo2} are essentially necessary. Indeed, when this condition is not satisfied, there are non constant solutions (cfr. for instance the following Theorems \ref{teoCLASS1}, \ref{teoCLASS2} and the existence results in \cite{LinWei}). 

 }
\end{remark}

\begin{remark} \label{rem2}
{\em Theorem \ref{teo1} recovers and significantly improves some previous results demonstrated in \cite{QS1} and \cite{Ma-Zhao}. Indeed, by choosing $ \alpha =0$, $\theta =3$, $s=1$, $ \gamma =1$ in \eqref{Sistema}, we recover the cubic system $(1.6)$ in \cite{QS1} 

\beq\label{SistemaBE}
\begin{cases}

- \Delta u  = uv^2 - \lambda u^3 & \text{in $\R^N$}\\
- \Delta v  = vu^2 - \lambda v^3 & \text{in $\R^N$}\\
\quad u\ge0, \, v \ge0 & \text{on $\R^N$}
\end{cases}
\eeq
which appears in Bose-Einstein condensation, and by choosing $ \alpha =1$, $\theta =2r+1$, $s=r$, $ \gamma =1$  in \eqref{Sistema}, we recover system $(1.8)$ in \cite{QS1}  and system $(6)$ in \cite{Ma-Zhao}

\beq\label{SistemaNOP}
\begin{cases}

- \Delta u  + u^m  = - \lambda u^{2r +1} + \beta u^r v^{r+1} & \text{in $\R^N$}\\
- \Delta v  + v^m  =  - \lambda v^{2r +1} + \beta v^r u^{r+1}  & \text{in $\R^N$}\\
\quad u\ge0, \, v \ge0 & \text{on $\R^N$}
\end{cases}
\eeq
arising in nonlinear optics. 

For instance, we recover and extend Theorem 1.6 of \cite{QS1} and Theorem 2 of \cite{Ma-Zhao} (cfr. also Remark 1.8 of \cite{QS1}), since $(u,v) $ is merely a distributional solution, possibly sign-changing and no further assumption is made on the solution $(u,v)$.  In particular, $(u,v)$ need not to be neither a ground  state nor a classical positive decaying solution. Furthermore, as far as system \eqref{SistemaBE} is concerned, we do not have any restriction about the parameter $\lambda >0$.    

Moreover, if we restrict our attention to non-negative solutions, we can further extend the above mentioned results to obtain precise classification results for models naturally arising in physical applications. More precisely we have :

 } 
\end{remark}

\medskip

\begin{theorem} {\bf  (of classification I) }\label{teoCLASS1}
Let $ N\ge 1 $ and assume $s\ge0$, $\gamma \ge1$, $\beta \ge 0 $, $ \lambda >0$. Let $(u,v)$ be a non-negative distributional solution of

\beq\label{SistemaBEGene}
\begin{cases}

- \Delta u  = \beta u^s v^{s+\gamma} - \lambda u^{2s +\gamma} & \text{in $\R^N$}\\
- \Delta v  =  \beta v^s u^{s+\gamma} - \lambda v^{2s +\gamma} & \text{in $\R^N$}
\end{cases}
\eeq
such that  $ u, v \in L^{2s+\gamma}_{loc}(\R^N)$.  

\medskip

\noindent Then $ u=v$.

\medskip

\noindent Furthermore,  

\item[i)] if $ \, \lambda > \beta $, then $ u=v=0$.

\item[ii)] If $\, \lambda = \beta$, then $ u=v=const.$

\item[iii)] If $ \, \lambda < \beta $ and 

\beq \label{expJames} 1 <  2s+\gamma \le 
\begin{cases}
+\infty & \text{if}  \quad N \le 2,\\\\
\frac{N}{N-2} & \text{if}  \quad N \ge 3,
\end{cases}
\eeq
then $u = v = 0$.

\item[iv)] If $ \, \lambda < \beta$,  $N \ge 3$, $ u,v \in H^1_{loc}(\R^N) \cap L^{2s+\gamma}_{loc}(\R^N)$  and 

\beq \label{expSob} \frac{N}{N-2} < 2s+\gamma < \frac{N+2}{N-2}
\eeq
then $u = v = 0 $.

\item[v)] If $ \, \lambda < \beta$,  $N \ge 3$, $ u,v \in H^1_{loc}(\R^N) \cap L^{2s+\gamma}_{loc}(\R^N)$  and 

\beq \label{expSobCrit} 2s+\gamma = \frac{N+2}{N-2}
\eeq
then either $ u= v=0$ or

\beq \label{Caf-Gidas-Spr}
u = v = c(N, \lambda, \beta) \Big [ \frac{\eta}{\vert x - x_0\vert^2 + \eta^2}  \Big ]^{\frac{N-2}{2}}
\eeq
for some $ \eta>0$, $x_0 \in \R^N$ and $ c(N, \lambda, \beta)>0$.

\end{theorem}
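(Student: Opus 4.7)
The plan is first to use Theorem~\ref{teo1} to reduce the system to a scalar semilinear equation, and then to invoke the appropriate classical scalar Liouville or classification theorem in each sub-case. System \eqref{SistemaBEGene} is the special case of \eqref{Sistema} with $\alpha\equiv 0$ (so that the hypothesis on $m$ plays no role) and with $\theta$ identified with $2s+\gamma$; hence Theorem~\ref{teo1} applies and yields $u=v$, provided $2s+\gamma>1$ (which is automatic in parts (iii)--(v) and is the only genuinely interesting case in parts (i)--(ii)). Setting $w:=u=v\ge0$, the problem collapses to
\beq \nonumber
-\Delta w = (\beta-\lambda)\, w^{\,2s+\gamma} \quad \text{in}\quad {\cal D}'(\R^N),
\eeq
with $w\in L^{2s+\gamma}_{loc}(\R^N)$.

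For (i), I would note that $\Delta w \ge (\lambda-\beta)\, w^{2s+\gamma}$ in ${\cal D}'(\R^N)$ with $\lambda-\beta>0$ and exponent strictly above $1$, so Lemma~2 of \cite{Brezis}, invoked exactly as at the end of the proof of Theorem~\ref{teo1}, forces $w\equiv 0$. For (ii), $w$ is a non-negative distributional harmonic function, hence smooth by Weyl's lemma and identically constant by the classical Liouville theorem.

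For cases (iii)--(v) one has $-\Delta w = \mu\, w^p$ with $\mu:=\beta-\lambda>0$ and $p:=2s+\gamma>1$; after the rescaling $w\mapsto \mu^{1/(p-1)}w$ this becomes the standard equation $-\Delta w = w^p$. In range (iii), with $1<p\le N/(N-2)$ (resp.\ $p>1$ arbitrary when $N\le 2$), the Liouville theorem for non-negative \emph{distributional} solutions of $-\Delta w=w^p$ below the Serrin exponent applies and yields $w\equiv 0$; this can be obtained via the Mitidieri--Pohozaev integral test-function technique. In range (iv), the extra hypothesis $w\in H^1_{loc}$ together with standard elliptic bootstrap on the subcritical equation yields $w\in C^2(\R^N)$, and the celebrated Gidas--Spruck Liouville theorem gives $w\equiv 0$. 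In case (v), the same bootstrap shows $w\in C^2$ and the Caffarelli--Gidas--Spruck classification forces $w$ to be either zero or an Aubin--Talenti bubble; undoing the rescaling $w\mapsto \mu^{1/(p-1)}w$ recovers the explicit form \eqref{Caf-Gidas-Spr}.

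The main obstacle is the delicate distinction between the distributional regime $p\le N/(N-2)$ and the $H^1_{loc}$ regime $p>N/(N-2)$: the singular function $c\,|x|^{-2/(p-1)}$ is a non-trivial non-negative distributional solution of $-\Delta w = w^p$ on the whole of $\R^N$ precisely when $p>N/(N-2)$, and it is ruled out by the $H^1_{loc}$ hypothesis exactly when $p<(N+2)/(N-2)$. Matching the correct regularity hypothesis with the correct scalar theorem in each range -- and, at the critical exponent, invoking a classification (rather than a mere Liouville) result -- is really the entire content of the argument once the symmetry step has done its work.
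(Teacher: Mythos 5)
Your proposal follows essentially the same route as the paper: reduce via Theorem~\ref{teo1} to the scalar equation $-\Delta w = (\beta-\lambda)\,w^{2s+\gamma}$, then handle (i)--(ii) by the Kato/Brezis argument and the Liouville theorem for non-negative harmonic functions (the paper does this by citing items~(i) and~(iv) of Theorem~\ref{teo2}, which encode exactly those two steps), handle (iii) via the Mitidieri--Pohozaev test-function technique, and handle (iv)--(v) by upgrading to classical regularity and invoking Gidas--Spruck and Caffarelli--Gidas--Spruck. The substance, the choice of scalar theorems, and the regularity bookkeeping all match the paper's proof.
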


\medskip

\begin{remark} 
{\em

\item [1)] Note that system \eqref{SistemaBE} is obtained by setting  $\beta =1$, $s= \gamma =1$ in \eqref{SistemaBEGene}. 

\item [2)] Theorem \ref{teoCLASS1} provides a complete classification in dimension $ N\le2$ (no restriction is made on the parameters $s, \gamma, \beta$ and $ \lambda$).  It also provides a complete classification for the cubic system \eqref{SistemaBE} in dimension $ N \le 4$ (cfr. also Remark \ref {rem2}). 

\item [3)]  The assumption $ u,v \in H^1_{loc}(\R^N)$ is necessary in {\it (iv)} and {\it (v)}. Indeed, for instance, for all $s\ge 0$ and $\gamma\ge 1$ such that $2s+\gamma >  \frac{N}{N-2}$, there is a singular radial positive solution $(u,v) $ with $ u = v = c(N,s,\gamma, \lambda,\beta) \vert x \vert^{-\frac{2}{2s +\gamma -1}}$ and where $c(N,s,\gamma, \lambda, \beta)>0$ is an explicit constant.

}
\end{remark}

\medskip

\begin{proof}[Proof of Theorem \ref {teoCLASS1}] 
By Theorem \ref{teo1} we have $u=v$. Items {\it i)} and {\it ii)} follow directly from {\it i)} and {\it iv)} of Theorem \ref{teo2}. 
To proceed, we observe that system \eqref{SistemaBEGene} reduces to the equation 

\beq \label{eqLE} 
 - \Delta u  = (\beta-\lambda) u^{2s +\gamma} \qquad {\rm in} \quad {\cal D}^{'} (\R^N).
\eeq

By a standard density argument, we can use test functions of class $ C^2_c$ in the above equation \eqref{eqLE}. Thus, the desired result follows, for instance, from Theorem 2.1 of \cite{MP}. 

%To prove {\it iii)} we set $ p = 2s + \gamma>1$ and fix $m > \max \{2,p'\}.$ Using equation \eqref{eqLE} we get, for every $ \varphi \in C^{\infty}_c(\R^N)$, $\varphi \ge0$, 

%\[
%\int u^p \varphi^m = - (\beta - \lambda) \int u \Delta( \varphi^m) 
%\]
%\[
%= - (\beta - \lambda) \int u(m \varphi^{m-1} \Delta \varphi + m(m-1) \varphi^{m-2} \vert \nabla \varphi \vert^2 ) \le - m (\beta - \lambda) \int u \varphi^{m-1} \Delta \varphi
%\]
%\[
%\le  \int  \frac{u^p}{p} \varphi^m + [m (\beta - \lambda)]^{p'} \int  \frac{\varphi^{m-p'}}{p'} \vert \Delta \varphi %\vert^{p'}
%\]
%which leads to
%\beq \label{cap}
%\int {u^p}\varphi^m \le [m (\beta - \lambda)]^{p'} \int  {\varphi^{m-p'}}\vert \Delta \varphi \vert^{p'}.
%\eeq

%Now we conclude by making use of the standard cut-off functions $\f_R(x):=\f(|x|/R)$, $R>0$, where $\f: [0,+\infty) \to \R$ is a function of class $\mathcal{C}^\infty$ such that
%\[
%\begin{cases}
%\f(t)=1 & t \in [0,1], \\
%\f(t)=0 & t \in [2,+\infty),\\
%0 \leq \f(t) \leq 1 & t \in (1,2).
%\end{cases} 
%\]

%Inserting $\f_R$ into \eqref{cap} we obtain 

%\beq \label{cap2}
%\int_{B_R(0)} {u^p}\varphi^m \le C(N,p')[m (\beta - \lambda)]^{p'} R^{N-2p'},
%\eeq
%and the desired conclusion follows by observing that, in view of \eqref{expJames}, the R-H-S of \eqref{cap2} tends to %zero as $R$ goes to $+\infty$.  Note that the above argument works only assuming that $u$ is a non-negative distributional supersolution of \eqref{eqLE}. 

%\medskip

When {\it iv}) (or {\it v})) is in force, it is well-known that $u$ is a classical solution (i.e. of class $C^2$) of the equation \eqref{eqLE}. Hence, the claims follow immediately from the celebrated results of Gidas and Spruck \cite{GS1, GS2} and of Caffarelli, Gidas and Spruck \cite{CGS}. \end{proof} 

\medskip

Now we turn our attention to the system \eqref{SistemaNOP} and we prove the following

\medskip

\begin{theorem} {\bf  (of classification II) }\label{teoCLASS2}
Let $ N\ge 1 $ and assume $m>0$, $r>0$, $\beta \ge 0 $, $ \lambda >0$. Let $(u,v)$ be a non-negative distributional solution of 

\beq\label{SistemaNOP2}
\begin{cases}

- \Delta u  + u^m  = - \lambda u^{2r +1} + \beta u^r v^{r+1} & \text{in $\R^N$}\\
- \Delta v  + v^m  =  - \lambda v^{2r +1} + \beta v^r u^{r+1}  & \text{in $\R^N$}
\end{cases}
\eeq
such that  $u, v \in L^p_{loc}(\R^N)$,  with $ p = \max \{m, 2r+1\}$.  

\medskip

\noindent Then $ u=v$.

\medskip

\noindent Furthermore,  

\item[i)] if $ \, \lambda > \beta $, then $ u=v=0$.

\item[ii)] If $\, \lambda = \beta$ and $ m > 1$,  then $ u=v=0.$

\item[iii)] If $ \, \lambda < \beta $ and $ 2r+1 < m$, 

\noindent then $(u,v)$ is a smooth, bounded and classical solution of \eqref{SistemaNOP2}, it satisfies the following universal and sharp $L^{\infty}$-bound 

\beq \label{bound}
 \Vert u \Vert_{\infty} =\Vert v \Vert_{\infty} \le (\beta - \lambda)^{\frac{1}{m-(2r+1)}}.
\eeq

\medskip

\noindent Moreover, if either $ N\le 2$ or, $ N\ge 3 $ and $ 2r + 1 \le \frac{N+2}{N-2} $,  
then either $u = v = 0$ or $  u = v = (\beta - \lambda)^{\frac{1}{m-(2r+1)}}$. 

\smallskip

\item[iv)] If $ \, \lambda < \beta$ and $ 2r+1 =m$, we have :

\item[1)] if $ 0< \beta - \lambda <1$, then $ u=v=0$.
\item[2)] if $ \beta - \lambda =1$, then $ u=v=const.$
\item[3)] if $ \beta - \lambda >1$ and 

\beq \label{expJames} 2r+1 \le
\begin{cases}
+\infty & \text{if}  \quad N \le 2,\\\\
\frac{N}{N-2} & \text{if}  \quad N \ge 3,
\end{cases}
\eeq
then $u = v = 0$.  

\item[4)] if $ \beta - \lambda >1$, $N \ge 3$, $ u,v \in H^1_{loc}(\R^N) \cap L^{2r+1}_{loc}(\R^N)$  and 

\beq \label{expJames} 2r+1 <
\begin{cases}
+\infty & \text{if}  \quad N \le 2,\\\\
\frac{N+2}{N-2} & \text{if}  \quad N \ge 3,
\end{cases}
\eeq
then $u = v = 0$.

\item[5)] if $ \beta - \lambda >1$, $N \ge 3$, $ u,v \in H^1_{loc}(\R^N) \cap L^{2r+1}_{loc}(\R^N)$  and 

\beq \label{expSobCrit2} 2r+1 = \frac{N+2}{N-2}
\eeq
then either $ u= v=0$ or

\beq \label{Caf-Gidas-Spr2}
u = v = c(N, \lambda, \beta) \Big [ \frac{\eta}{\vert x - x_0\vert^2 + \eta^2}  \Big ]^{\frac{N-2}{2}}
\eeq
for some $ \eta>0$, $x_0 \in \R^N$ and $ c(N, \lambda, \beta)>0$. 
 
\item[v)] If $ \, \lambda < \beta$ and $ 2r+1 >m $ we have :

\item[1)] when $ m \ge1$, $u,v \in C^0(\R^N)$ and either $u$ or $v$ tends to zero uniformly, as $ \vert x \vert \to +\infty$, then either $u=v=0$ or

\noindent  $u=v>0$ everywhere on $\R^N$ and $u$ is necessarily radially symmetric and strictly radially decreasing, i.e., $ u(x) =v(x) = w(\vert x - x_0 \vert)$, for some $ x_0 \in \R^N$ and some positive function 
$w$ such that $ w'(0) = 0$ and $ w^{'} (r) <0$ for $r>0$. Moreover, the profile $w$ is unique. 

\item[2)] when $ m < 1$, $u,v \in C^0(\R^N)$ and either $u$ or $v$ tends to zero uniformly, as $ \vert x \vert \to +\infty$, then either $u=v=0$ or, $u=v$ is compactly supported, $u$ has open support (i.e. the set $\{ \, x \in \R^N : u(x)>0 \, \} $) on a finite number of open balls in $\R^N$, on each of which it is radially symmetric about the center of the ball and its profile is unique. 

\smallskip

Furthermore, when $ N \ge 2$,  system \eqref{SistemaNOP2} admits a non-constant solution $(u,v)$ such that $u$ or $v$ tends to zero uniformly, as $ \vert x \vert \to +\infty$, if and only if  

%$ 2r+1 < \frac{N+2}{N-2} $. 

%Finally, if $m=1$ and $ N \ge1$, system \eqref{SistemaNOP2} admits a non-constant solution $(u,v)$, such that $u$ or $v$ tends to zero uniformly, as $ \vert x \vert \to +\infty$, if and only if
\beq \label{expGS} 2r+1 <
\begin{cases}
+\infty & \text{if}  \quad N = 2,\\\\
\frac{N+2}{N-2} & \text{if}  \quad N \ge 3.
\end{cases}
\eeq

\end{theorem}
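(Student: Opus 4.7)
The starting point is to invoke Theorem \ref{teo1}, which gives $u = v$ and collapses the system to the scalar equation
$$
-\Delta u + u^m = (\beta - \lambda)\, u^{2r+1} \qquad \text{in } \mathcal{D}'(\R^N).
$$
With this reduction, parts (i) and (ii) are immediate from parts (i) and (ii) of Theorem \ref{teo2}, taking $\alpha \equiv 1$, $\theta = 2r+1$, $s = r$, $\gamma = 1$.

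For part (iii), with $2r+1 < m$ and $\beta > \lambda$, set $K := (\beta - \lambda)^{1/(m-(2r+1))}$. I would prove the sharp universal $L^\infty$-bound $\|u\|_\infty \le K$ as follows. For each $\varepsilon > 0$, apply Kato's inequality to $w_\varepsilon := (u - K - \varepsilon)^+$: on the set $\{u > K + \varepsilon\}$ one has the algebraic factorisation $u^m - (\beta - \lambda) u^{2r+1} = u^{2r+1}(u^{m-(2r+1)} - K^{m-(2r+1)})$, and a direct case analysis splitting the range $\{K + \varepsilon \le u \le 2^{1/(m-(2r+1))} K\}$ (where the right-hand side is bounded below by a positive constant, hence by a constant multiple of $w_\varepsilon^m$) from $\{u \ge 2^{1/(m-(2r+1))} K\}$ (where the $u^m$-term dominates half of itself, hence at least $w_\varepsilon^m/2$) yields a differential inequality of the form
$$
\Delta w_\varepsilon \ge c_\varepsilon \, w_\varepsilon^m \qquad \text{in } \mathcal{D}'(\R^N)
$$
with $c_\varepsilon > 0$. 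Since $m > 1$, Brezis's lemma (exactly as in the proof of Theorem \ref{teo1}) gives $w_\varepsilon \equiv 0$; letting $\varepsilon \downarrow 0$ one obtains $u \le K$, and sharpness is immediate because $u \equiv K$ is itself a solution. Elliptic regularity then upgrades $(u,v)$ to a classical $C^2$-solution. The Liouville conclusion under either $N \le 2$ or $2r+1 \le (N+2)/(N-2)$ follows from a strong-maximum-principle trichotomy ($u \equiv 0$, $u \equiv K$, or $0 < u < K$ everywhere), combined with a moving-plane / integral-identity argument tailored to the monostable nonlinearity $g(u) = (\beta-\lambda) u^{2r+1} - u^m$ to rule out the intermediate alternative.

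For part (iv), with $2r+1 = m$, the scalar equation reads $-\Delta u = (\beta - \lambda - 1) u^m$. The sub-cases split by the sign of $\beta - \lambda - 1$: sub-case~(1) ($0 < \beta - \lambda < 1$) follows from Brezis's lemma, since $m = 2r+1 > 1$; sub-case~(2) ($\beta - \lambda = 1$) makes $u$ non-negative and harmonic, hence constant; sub-cases~(3)--(5) (with $\beta - \lambda > 1$) reduce to the classical Liouville--classification problem for $-\Delta u = c u^m$, $c > 0$, and are handled respectively by Mitidieri--Pohozaev, Gidas--Spruck, and Caffarelli--Gidas--Spruck. For part (v), with $2r+1 > m$, an explicit rescaling $u(x) = A \tilde u(Bx)$ with $A = (\beta-\lambda)^{-1/(2r+1-m)}$ and $B = A^{(m-1)/2}$ normalises $\beta - \lambda$ to $1$, reducing the equation to $-\Delta \tilde u + \tilde u^m = \tilde u^{2r+1}$. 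Under the decay-at-infinity hypothesis, (v.1) for $m \ge 1$ is the combination of Gidas--Ni--Nirenberg (radial symmetry about some point via moving planes) with Serrin--Zou (uniqueness of the radially decreasing ground-state profile); (v.2) for $m < 1$ comes from the compact-support / dead-core classification of Pucci--Serrin and Flucher--M\"uller. The existence/non-existence dichotomy in \eqref{expGS} combines the Pohozaev / Ni--Serrin obstruction (non-existence in the supercritical and critical cases) with variational existence in the subcritical range.

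The main obstacle, in my view, is the Liouville part of (iii) under only the subcritical Sobolev condition: ruling out non-constant bounded solutions with $0 < u < K$ of the monostable equation $-\Delta u = (\beta-\lambda) u^{2r+1} - u^m$ on $\R^N$ is not a direct consequence of any single classical Liouville theorem for pure-power nonlinearities, and requires an argument carefully tailored to this specific mixed-power structure.
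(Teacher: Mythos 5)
Your proposal closely tracks the paper's proof for the reduction to a scalar equation, parts (i), (ii), the $L^\infty$-bound in (iii), and parts (iv) and (v); the $L^\infty$-bound argument is essentially the same (the paper obtains $\Delta u_\varepsilon^+ \ge c\,(u_\varepsilon^+)^{2r+1}$ rather than $c\,w_\varepsilon^m$, but both exponents exceed $1$ and Brezis's lemma applies either way). However, the Liouville conclusion in part (iii) — which you yourself flag as the main obstacle — is a genuine gap. You sketch ``a strong-maximum-principle trichotomy combined with a moving-plane / integral-identity argument tailored to the monostable nonlinearity,'' but this is not an argument; it is a description of what an argument would have to accomplish. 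The paper resolves it by first making a key observation missing from your write-up: once $0\le u\le K:=(\beta-\lambda)^{1/(m-(2r+1))}$, the right-hand side $u^m-(\beta-\lambda)u^{2r+1}=u^{2r+1}\bigl(u^{m-(2r+1)}-K^{m-(2r+1)}\bigr)\le 0$, so $u$ is a smooth, bounded, non-negative \emph{superharmonic} function. For $N\le 2$ this alone forces $u$ to be constant, and solving the equation then gives $u\in\{0,K\}$. For $N\ge3$ the paper does not build a tailored moving-plane argument; instead, now that $u$ is known to be smooth and bounded by $K$, it invokes two existing Liouville theorems that apply precisely to such mixed-power nonlinearities with a sign change: Theorem~2.4 of Dancer--Du \cite{DD} when $2r+1<\frac{N+2}{N-2}$, and Theorem~3 of Bianchi \cite{B} when $2r+1=\frac{N+2}{N-2}$. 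Without the superharmonicity observation you have no argument for $N\le 2$, and without these references your $N\ge3$ case remains unproved. The remaining parts (iv) and (v) in your proposal agree in substance with the paper, modulo slightly different literature citations (the paper uses Cort\`azar--Elgueta--Felmer, Serrin--Zou, Pucci--Serrin, Serrin--Tang and Gazzola--Serrin--Tang for (v)).
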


\medskip

\begin{remark} {\em

The situation is more complicated when $ m < 1 $. Indeed,

\item [1)]  in view of conclusion $2)$ of item $v)$,  system \eqref{SistemaNOP2} admits non-negative, non-constant, compactly supported classical solutions. This also shows the importance to consider non-negative solutions.

\item [2)]  For every $ x_0 \in \R$ set

\beq \label{CompSupp} w(t) : = 
\begin{cases}
0 & \text{if} \quad t \le 0,\\\\
[(\frac{2}{1-m}) (\frac{2}{1-m} -1)]^{-\frac{1}{1-m}} \, t^{\frac{2}{1-m}}& \text{if}  \quad t \ge 0,
\end{cases}
\eeq
and $u_{x_0}(x) : = w(x_1 - x_0,....,x_N) $.  The couple $(u_{x_0}, u_{x_0})$ is a non-negative, non-constant, classical solution of \eqref{SistemaNOP2} with $ \lambda = \beta  $ and $ 0 < m <1$.  Combining this example  with the example of Remark \ref{rem1}, we see that the conclusion of item $ii)$ is sharp. 

}
\end{remark}

\medskip

\begin{proof}[Proof of Theorem \ref {teoCLASS2}] 
By Theorem \ref{teo1} we have $u=v$. Items {\it i)} and {\it ii)} follow directly from {\it i)} and {\it ii)} of Theorem \ref{teo2}.  Since $u=v$, system \eqref{SistemaNOP2} reduces to the equation

\beq \label{eqEqNOP} 
 - \Delta u  + u^m = (\beta-\lambda) u^{2r+1} \qquad {\rm in} \quad {\cal D}^{'} (\R^N).
\eeq
with $ \beta - \lambda>0$. 

To prove $iii)$, for every $ \varepsilon >0$ we set $u_{\varepsilon} := u - (\beta - \lambda)^{\frac{1}{m-(2r+1)}} -\varepsilon$ and apply Kato's inequality to \eqref{eqEqNOP} to get 

\beq 
\Delta u_{\varepsilon}^+ \ge \Delta u 1_{\{ u_{\varepsilon} >0 \}} = 
\eeq
\beq
u^{2r+1} [u^{m-(2r+1)} - (\beta - \lambda)] 1_{\{ u_{\varepsilon} >0 \}} \ge
c [u_{\varepsilon}^+]^{2r+1} \qquad {\rm in} \quad {\cal D}^{'} (\R^N), 
\eeq
where $c$ is a positive constant depending on $ \varepsilon, m, r, \beta $ and $\lambda$.   From the latter we infer $ u_{\varepsilon}^+ \le 0$ on $\R^N$ and thus $ u \le (\beta - \lambda)^{\frac{1}{m-(2r+1)}} $ by letting $ \varepsilon \to 0$.  This gives the bound \eqref{bound}, whose sharpness follows by noticing that $u=v=(\beta - \lambda)^{\frac{1}{m-(2r+1)}}$ is a solution of \eqref{SistemaNOP2}. The smoothness of $(u,v)$ immediately follows from standard elliptic regularity, since $(u,v) \in L^{\infty}$. 
In view of \eqref{bound} and of \eqref{eqEqNOP} we see that $u$ is a smooth positive superharmonic function. Thus $u$ must be constant when $N\le2$ and the only possibilities are $u = 0$ or $  u = (\beta - \lambda)^{\frac{1}{m-(2r+1)}}$ (since $u$ solves \eqref{eqEqNOP}). To treat the case $ N\ge3$ we need to use, in an essential way, the fact that we proved that $u$ is smooth and satisfies the bound \eqref{bound}. Indeed, in view of those properties of $u$,  we can invoke Theorem 2.4 of \cite{DD}, when $ 2r+1 < \frac{N+2}{N-2} $, and Theorem 3 of \cite{B}, when $ 2r+1 = \frac{N+2}{N-2} $, to obtain the desired conclusion.

When $iv)$ is in force, system \eqref{SistemaNOP2} boils down to 

\beq \label{} 
 - \Delta u  = (\beta-\lambda -1) u^{2r+1} \qquad {\rm in} \quad {\cal D}^{'} (\R^N),
\eeq
and the claims follows as in the proof of item $iii)$ of Theorem \ref{teoCLASS1}. 

Proof of $v)$ : $1)$ by the strong maximum principle, either $u=v=0$ or $ u=v>0$ on $\R^N$.  In the latter case $u$ is radially symmetric and strictly radially decreasing by the well-known results of \cite{GNN, SZsim}. Uniqueness of the profile $w$ follows from \cite{PuS, STang}.  

The claims of part $2)$ follows by applying the results of \cite{CEF, SZsim} and \cite{PuS, STang}.

The results for $ N \ge2 $ follow from \cite{CEF, STang} and \cite{GST} (cfr. also the references therein). 
 
%For $m=1$, the existence goes back to \cite{Stra}, the uniqueness was proved in \cite{Kw} (cfr. also the references therein). The non-existence for critical and supercritical values of $2r+1$ is a well-known consequence of Pohozeav  identity (note that, when $m=1$,  $u$ belongs to $H^1(\R^N)$, due to its exponential decay at infinity). 

\end{proof}

\section{More general results}

The method used to prove the above results also applies to more general systems and with nonlinearities which are not necessarily of polynomial type. To this end we need to recall the well-known {\it Keller-Osserman condition} \cite{Keller,Oss}.  

A non-decreasing function $f  \in C^0([0, +\infty) , 
[0, +\infty))$ is said to satisfy the {\it Keller-Osserman condition} if

\beq\label{K-Ocond}
\begin{cases}

f(0) = 0 \text{},\\

f(t)>0, \quad  \text {if} \quad  t>0,\\

\int_{}^{+\infty} \Big[ \int_{0}^s f(t) dt \Big]^{- {1\over2}} ds < +\infty. 
\end{cases}
\eeq

A typical example of function satisfying the above condition \eqref{K-Ocond} is $ f(t) = t^q$, $ q>1$. Also $ f(t) = t\log^{\delta}(t+1)$, $\delta>2$,  satisfies \eqref{K-Ocond}, while $f(t) =t $ does not fullfill \eqref{K-Ocond}. 

\medskip

\begin{theorem} \label{teo3}
Assume $ N\ge 1 $ and let $(u,v)$ be a distributional solution of 
\beq\label{Sistema2}
\begin{cases}

- \Delta u  = h(x,u,v)   & \text{in $\R^N$}\\
- \Delta v  = h(x,v,u)  & \text{in $\R^N$}
\end{cases}
\eeq

\noindent where $ h : \R^N \times \R^2 \rightarrow \R $ is continuous and satisfies 

 \beq \label{monot}   
 h(x,v,u)-h(x,u,v) \ge f(u-v) \qquad \forall \,  u \ge v, \quad \forall x \in \R^N
\eeq
and $ f $ is a convex function fulfilling  the  Keller-Osserman condition. 

\smallskip

\noindent If $u,v \in L^1_{loc} (\R^N)$ and $ h(\cdot,u,v), h(\cdot, v,u) \in L^1_{loc}(\R^N)$,  then $ u=v$.

\begin{proof}

Set $\psi = u-v$. The assumptions on $u$ and $v$ imply that both $\psi$ and $ \Delta \psi$ belong to $ L^1_{loc}(\R^N)$. Hence Kato's inequality yields

\beq \nonumber
\Delta (\psi^+) \ge (h(x,v,u)-h(x,u,v) ) 1_{\{ u-v>0\}} 
\eeq
\beq  \label{differenzaeqKatof} 
\ge  f(u-v) 1_{\{ u-v>0\}}  = f(\psi^+) \qquad {\rm in} \quad {\cal D}^{'}(\R^N). 
\eeq
Thus we can apply Theorem 4.7 of \cite{Far} (where we have set $ f(t) =0$ if $ t \le 0$) to get that  $ \psi^+ = 0$. To conclude we proceed as in the proof of Theorem \ref{teo1}. 
\end{proof}

\end{theorem}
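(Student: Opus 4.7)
My plan is to mimic the proof of Theorem \ref{teo1}, now using Kato's inequality together with a Keller-Osserman Liouville-type result in place of the pure-power differential inequality exploited there. First I would set $\psi := u - v$ and subtract the two equations of \eqref{Sistema2} to obtain
\beq \nonumber
\Delta \psi = h(x,v,u) - h(x,u,v) \qquad {\rm in} \quad {\cal D}'(\R^N).
\eeq
The hypotheses $u, v \in L^1_{loc}(\R^N)$ and $h(\cdot, u, v), h(\cdot, v, u) \in L^1_{loc}(\R^N)$ ensure that both $\psi$ and $\Delta \psi$ belong to $L^1_{loc}(\R^N)$, so Kato's inequality is applicable.

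Applying Kato's inequality and then using the monotonicity hypothesis \eqref{monot} on the set $\{u > v\}$, I would obtain
\beq \nonumber
\Delta \psi^+ \ge \bigl( h(x,v,u) - h(x,u,v) \bigr) 1_{\{u>v\}} \ge f(u-v) 1_{\{u-v>0\}} = f(\psi^+) \qquad {\rm in} \quad {\cal D}'(\R^N),
\eeq
with the convention that $f$ is extended by $0$ on $(-\infty, 0]$. At this point $\psi^+$ is a nonnegative distributional subsolution of $\Delta w = f(w)$ on $\R^N$, where $f$ is convex, nondecreasing, and satisfies the Keller-Osserman condition \eqref{K-Ocond}.

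The decisive step is then to invoke a Liouville-type theorem valid under exactly these hypotheses, namely Theorem 4.7 of \cite{Far}, which forces $\psi^+ \equiv 0$, i.e. $u \le v$ a.e. on $\R^N$. Since both the system \eqref{Sistema2} and the condition \eqref{monot} are symmetric under the exchange of $u$ and $v$, I would run the same argument with the roles reversed to obtain $v \le u$ a.e., and the identity $u = v$ would follow.

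The step doing all the real work is the Keller-Osserman Liouville theorem itself; the rest of the proof is a routine transcription of Theorem \ref{teo1}, with the explicit inequality $\Delta \psi^+ \ge \lambda_0 c_q (\psi^+)^\theta$ replaced by the general $\Delta \psi^+ \ge f(\psi^+)$. The non-formal ingredients are thus only the monotonicity condition \eqref{monot}, which is tailored to produce precisely a Keller-Osserman-type inequality for $\psi^+$ after Kato, and the convexity together with the integral bound in \eqref{K-Ocond}, which are exactly what is needed to conclude $\psi^+ \equiv 0$ without any further a priori control.
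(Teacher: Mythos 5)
Your proposal reproduces the paper's argument exactly: set $\psi = u - v$, apply Kato's inequality together with the monotonicity hypothesis \eqref{monot} to obtain $\Delta \psi^+ \ge f(\psi^+)$ in ${\cal D}'(\R^N)$, invoke Theorem 4.7 of \cite{Far} (with $f$ extended by $0$ on $(-\infty,0]$) to conclude $\psi^+ = 0$, and then exchange the roles of $u$ and $v$. No substantive differences.
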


The above theorem is not true if the Keller-Osserman conditon is not satisfied, as witness the example given by system \eqref{Sistemasempli}, which is of the form \eqref{Sistema2} with $ h(x,u,v) = -u$ and satisfies \eqref{monot}  with $f(t) = t$. 

\smallskip

Nevertheless not all is lost, since we have the following:

\medskip

\begin{theorem} \label{teo4}
Assume $ N\ge 1 $ and let $(u,v)$ be a distributional solution of 
\beq\label{Sistema2a}
\begin{cases}

- \Delta u  = h(x,u,v)   & \text{in $\R^N$}\\
- \Delta v  = h(x,v,u)  & \text{in $\R^N$}
\end{cases}
\eeq

\noindent where $ h : \R^N \times \R^2 \rightarrow \R $ is continuous and satisfies 

\beq 
 h(x,v,u)-h(x,u,v) \ge \nu (u-v) \qquad \forall \,  u \ge v, \quad \forall x \in \R^N,
\eeq
for some  constant $\nu>0$.

\smallskip

\noindent If $u,v \in L^1_{loc} (\R^N)$ and $ h(\cdot,u,v), h(\cdot, v,u) \in L^1_{loc}(\R^N)$,  then $ u=v$, whenever $u$ and $v$ have at most polynomial growth at infinity.
\end{theorem}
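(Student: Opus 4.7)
The plan is to follow the same structure as the proof of Theorem \ref{teo3}: apply Kato's inequality to reduce the problem to a scalar differential inequality for $\psi^+$, then invoke a Liouville theorem tailored to the hypotheses. Here the Keller--Osserman step is replaced by a Phragm\'en--Lindel\"of argument that exploits the polynomial growth assumption together with the strict positivity of $\nu$.

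Setting $\psi := u-v$, the integrability assumptions give $\psi \in L^1_{loc}(\R^N)$ and $\Delta \psi = h(\cdot, v, u) - h(\cdot, u, v) \in L^1_{loc}(\R^N)$, so Kato's inequality, combined with the monotonicity hypothesis on $h$, yields
\[
\Delta \psi^+ \ge (\Delta \psi)\, 1_{\{u-v>0\}} \ge \nu(u-v)\, 1_{\{u-v>0\}} = \nu \psi^+ \qquad \text{in } {\cal D}'(\R^N).
\]
Thus $w := \psi^+ \ge 0$ is a distributional subsolution of $\Delta w - \nu w \ge 0$ on $\R^N$ and inherits the polynomial growth of $u$ and $v$.

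The core step is the following Liouville claim: \emph{if $w \in L^1_{loc}(\R^N)$ is nonnegative, satisfies $\Delta w - \nu w \ge 0$ in ${\cal D}'$ with $\nu > 0$, and has at most polynomial growth, then $w \equiv 0$}. Mollifying by a standard nonnegative kernel $\rho_\eps$ yields $w_\eps := w * \rho_\eps$ which is smooth, nonnegative, of the same polynomial growth degree, and still satisfies $\Delta w_\eps - \nu w_\eps \ge 0$ pointwise (convolution with $\rho_\eps$ preserves the sign of distributions, and $\Delta w - \nu w$ is a nonnegative Radon measure). Let $f$ be the positive radial solution of $\Delta f - \nu f = 0$ on $\R^N$ (the modified Bessel profile, with $f(r) \sim C_N\, r^{-(N-1)/2} e^{\sqrt\nu\, r}$ as $r \to +\infty$). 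For any $\delta > 0$, the function $w_\eps - \delta f$ satisfies $\Delta(w_\eps - \delta f) - \nu(w_\eps - \delta f) \ge 0$, and on $\partial B_R$ the polynomial upper bound on $w_\eps$ is eventually dominated by the exponential lower bound on $\delta f$, so $w_\eps - \delta f \le 0$ on $\partial B_R$ for $R$ large. The weak maximum principle for the operator $\Delta - \nu$ on $B_R$ (applicable because $\nu > 0$) then yields $w_\eps \le \delta f$ on $B_R$, hence on the whole of $\R^N$ since $R$ is arbitrary. Letting $\delta \to 0^+$ forces $w_\eps \equiv 0$, and the convergence $w_\eps \to w$ in $L^1_{loc}$ as $\eps \to 0^+$ gives $w \equiv 0$.

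From $\psi^+ \equiv 0$ we conclude $u \le v$ a.e., and exchanging the roles of $u$ and $v$ (the hypothesis on $h$ is symmetric under this swap, reading $h(x,a,b) - h(x,b,a) \ge \nu(b-a)$ whenever $b \ge a$) gives $v \le u$, whence $u = v$. The main obstacle is the Phragm\'en--Lindel\"of comparison: its success crucially requires both the strict positivity of $\nu$ --- which validates the weak maximum principle for $\Delta - \nu$ on bounded balls and produces the exponentially growing barrier $f$ --- and the polynomial growth of $w$. If either hypothesis is removed, the exponential counterexamples from Remark \ref{rem1} show that no such Liouville statement can hold.
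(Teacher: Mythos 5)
Your proof is correct, but it takes a genuinely different route from the paper's. After the common Kato step giving $\Delta\psi^+ \ge \nu\psi^+$ in $\mathcal{D}'(\R^N)$, the paper proceeds with a purely integral argument: it tests the inequality against radial cut-offs $\f_R$ supported in $B_{2R}$, obtaining $\int_{B_R}\psi^+ \le \frac{C}{\nu R^2}\int_{B_{2R}}\psi^+$; the polynomial-growth hypothesis yields $\int_{B_{2R}}\psi^+\le C'R^{N+k}$, and iterating the cut-off estimate a finite number of times drives the bound to $\int_{B_R}\psi^+\le C''R^{-m}$ for some $m>0$, forcing $\int_{\R^N}\psi^+=0$. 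You instead mollify $\psi^+$ to a smooth pointwise subsolution $w_\eps$ of $\Delta-\nu$, and run a Phragm\'en--Lindel\"of comparison against the exponentially growing modified-Bessel barrier $f$ with $\Delta f-\nu f=0$, invoking the weak maximum principle for $\Delta-\nu$ on large balls and then letting $\delta\to0^+$. Both arguments are valid and exploit $\nu>0$ in the same essential way (it is what produces the gain of two powers of $R$ per iteration in the paper, and what validates the maximum principle and the exponential barrier in your version). The paper's route is more elementary and stays entirely at the distributional level, requiring no mollification, no Bessel asymptotics and no maximum-principle machinery; your route is more structural, makes the exponential/polynomial dichotomy vivid, and would adapt painlessly to more general elliptic operators with a zero-order coefficient bounded away from zero, at the cost of a mollification step to justify the pointwise application of the comparison principle.
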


\begin{proof} Set $ \psi = u-v$. As in the proof of Theorem \ref{teo3} we obtain 
\beq \nonumber
\Delta (\psi^+) \ge (h(x,v,u)-h(x,u,v) ) 1_{\{ u-v>0\}} 
\eeq
\beq  \label{differenzaeqKatononKO} 
\ge  \nu (u-v) 1_{\{ u-v>0\}}  = \nu \psi^+ \qquad {\rm in} \quad {\cal D}^{'}(\R^N). 
\eeq

We consider a $\mathcal{C}^\infty$ function $\f: [0,+\infty) \to \R$ such that
\[
\begin{cases}
\f(t)=1 & t \in [0,1], \\
\f(t)=0 & t \in [2,+\infty),\\
0 \leq \f(t) \leq 1 & t \in (1,2),
\end{cases} 
\]
and we set, for every $R>0$ and every $x \in \R^N$, $\, \f_R(x):=\f(|x|/R)$. 

Using  the cut-off functions $\f_R$ as test functions in \eqref{differenzaeqKatononKO}, and recalling that  $\psi^+$ has at most polynomial growth at infinity,  we have for any $R > 1$

\[
\int_{B_R} \psi^+\leq \frac{C}{\nu R^2}  \int_{B_{2R}} \psi^+ \leq C' R^{N + k - 2}
\]
for some $k\ge 0$, $C'>0$ independent of $R$.

Iterating the latter a finite number of times, we immediately obtain that

\[
\forall \, R>1 \qquad \int_{B_R} \psi^+ \leq C'' R^{-m}
\]
for some $m > 0$, $C''>0$ independent of $R$. This leads to  $\int_{\R^N} \psi^+ = 0,$ which in turn yields $ u \le v$ a.e. on $\R^N.$ Finally, exchanging the role of $u$ and $v$ we obtain the desired conclusion $u = v$.\end{proof}

\medskip

We conclude this section by noticing that, classification results similar to those of Theorem \ref{teoCLASS1} and/or Theorem \ref{teoCLASS2}, can also be established for solutions to the system \eqref{Sistema2},  under suitable assumptions on the function $h$. Nevertheless,  we do not want to stress on this point. 

\vspace{0.5cm}

\noindent \textbf{Acknowledgements: } The author thanks P. Quittner and P. Souplet for a careful reading of a first version of this article.

\noindent The author is supported by the ERC grant EPSILON ({\it Elliptic Pde's and Symmetry of Interfaces and Layers for Odd Nonlinearities}). 

\vspace{0.5cm}

\end{document}